\newcolumntype{C}{>{$}c<{$}}
\newtheorem{theorem}{Theorem}
\newtheorem{lemma}[theorem]{Lemma}
\newtheorem{rem}[theorem]{Remark}
\newtheorem{exmp}[theorem]{Example}
\begin{document}

\title{On the graph of non-degenerate linear $[n,2]_2$ codes}
\author{Mark Pankov}
\subjclass[2000]{05C60, 94B05, 94B27}
\keywords{linear code, Grassmann graph, adjacency preserving map}
\address{Faculty of Mathematics and Computer Science, University of Warmia and Mazury, S{\l}oneczna 54, Olsztyn, Poland}
\email{pankov@matman.uwm.edu.pl}

\maketitle

\begin{abstract}
Consider the Grassmann graph of $k$-dimensional subspaces of an $n$-dimensional vector space over the $q$-element field,
$1<k<n-1$.
Every automorphism of this graph is induced 
by a semilinear automorphism of the corresponding vector  space or a semilinear isomorphism to the dual vector space;
the second possibility is realized only for $n=2k$.
Let $\Gamma(n,k)_q$ be the subgraph of the Grassman graph formed by all non-degenerate linear $[n,k]_q$ codes.
If $q\ge 3$ or $k\ge 3$,
then every isomorphism of  $\Gamma(n,k)_{q}$ to a subgraph of the Grassmann graph
can be uniquely extended to an automorphism of the Grassmann graph.
For $q=k=2$ there is an isomorphism of  $\Gamma(n,k)_{q}$ to a subgraph of the Grassmann graph which does not have this property.
In this paper, we show that such exceptional isomorphism is unique up to an automorphism of the Grassmann graph.
\end{abstract}

\section{Introduction}

The $(n,k,q)$-Grassmann graph is formed by all $k$-dimen\-sional subspaces of  an $n$-dimensional vector space over the field of $q$ element.
We suppose that $1<k<n-1$ (for $k=1,n-1$ the graph is complete).
This is a classical example of distance regular graph \cite{BCN} and
Grassmann graphs (not necessarily over finite fields) are related to Tits buildings of general linear groups \cite{BC,Pankov1,Pasini,Shult}.

The $(n,k,q)$-Grassmann graph can be identified with the graph of linear $[n,k]_q$ codes 
such that two codes are connected by an edge if they have the maximal number of common codewords.
In practice, each useful linear code is non-degenerate, i.e. a generator matrix of such a code does not contain zero columns.
The graph of non-degenerate linear $[n,k]_q$ codes (the induced subgraph of the $(n,k,q)$-Grassmann graph) is investigated in  \cite{KP1,KP2,PankJACTA}.
In contrast to the $(n,k,q)$-Grassmann graph,  the structure of this graph essentially depends on the parameters  $n,k,q$ (structural properties holding for some triples $n,k,q$ fail for others).
Induced subgraphs of the $(n,k,q)$-Grassmann graph corresponding to different types of linear codes (projective and simplex codes, codes with lower bounded minimal dual distance) are considered in \cite{CGK,KPP,KP3}.

By Chow's theorem \cite{Chow}, every automorphism of a Grassmann graph (over an arbitrary field) is induced  by a semilinear automorphism of  
the corresponding vector space or a semilinear isomorphism to the dual vector space
(the second possibility is realized only in the case when the dimension of subspaces is the half of the dimension of the vector space).
The main result of \cite{PankJACTA} concerns isomorphisms of the graph of non-degenerate linear $[n,k]_q$ codes to subgraphs of the $(n,k,q)$-Grassmann graph.
Every such  isomorphism can be uniquely extended to an automorphism of the $(n,k,q)$-Grassmann graph if $q\ge 3$ or $k\ge 3$
and there is a non-extendable isomorphism if  $q=k=2$. 
In Section 4, we recall some arguments from \cite{PankJACTA} and explain why they do not work for the case when $q=k=2$. 

In the present paper, we show that a non-extendable isomorphism of the graph of non-degenerate linear $[n,2]_2$ codes to a subgraph of the $(n,2,2)$-Grassmann graph 
is unique up to an automorphism of this Grassmann graph (Theorem \ref{theorem-main}).
Note that this isomorphism is induced by a non-injective morphism of the corresponding projective space (Remark \ref{rem-morph}).

\section{Grassmann graphs and graphs of non-degenerate linear codes}
Let $V$ be an $n$-dimensional vector space over a field ${\mathbb F}$ (the field is not assumed to be finite)
and let ${\mathcal G}_{k}(V)$ be the Grassmannian  formed by $k$-dimensional subspaces of $V$.
We say that two $k$-dimensional subspaces are {\it adjacent} if their intersection is $(k-1)$-dimensional 
(then the sum of these subspaces is $(k+1)$-dimensional).
The {\it Grassmann graph} $\Gamma_{k}(V)$ is the simple graph whose vertex set is ${\mathcal G}_{k}(V)$ and 
two vertices ($k$-dimensional subspaces) are connected by an edge if they are adjacent.
The Grassmann graph is connected. If $k=1,n-1$, then this graph is complete.

The description of automorphisms of Grassmann graphs is based on the concept of semilinear automorphism.
A map $l:V\to V$ is called {\it semilinear} if 
$$l(x+y)=l(x)+l(y)$$
for all $x,y\in V$ and there is an endomorphism $\sigma$ of the field ${\mathbb F}$
such that 
$$l(ax)=\sigma(a)l(x)$$
for all $a\in {\mathbb F}$ and $x\in V$.
We say that $l$ is a {\it semilinear automorphism} of the vector space $V$
if $l$ is bijective and $\sigma$ is an automorphism of the field ${\mathbb F}$.
Every semilinear automorphism of $V$ induces an automorphism of the Grassmann graph $\Gamma_{k}(V)$.

Let $\omega$ be a non-degenerate symmetric form on $V$.
For every subspace $X\subset V$ we denote by $X^{\perp}_{\omega}$ the orthogonal complement of $X$ corresponding to the form $\omega$.
Then $X,Y\in {\mathcal G}_k(V)$ are adjacent if and only if $X^{\perp}_{\omega},Y^{\perp}_{\omega}\in {\mathcal G}_{n-k}(V)$ are adjacent.
Therefore, the orthocomplementary map $X\to X^{\perp}_{\omega}$ defines an isomorphism between the graphs $\Gamma_{k}(V)$ and $\Gamma_{n-k}(V)$
which is an automorphism of $\Gamma_{k}(V)$ if $n=2k$.

Chow's theorem \cite{Chow} states that every automorphism of $\Gamma_{k}(V)$, $1<k<n-1$ is induced by a semilinear automorphism of $V$
or $n=2k$ and it is the composition of the orthocomplementary map and a graph automorphism  induced by a semilinear automorphism of $V$.
If $k=1,n-1$, then any two distinct $k$-dimensional subspaces of $V$ are adjacent
and every bijective transformation of ${\mathcal G}_{k}(V)$ is an automorphism of $\Gamma_{k}(V)$.

Now, we suppose that ${\mathbb F}={\mathbb F}_{q}$ is the field of $q$ elements and $V={\mathbb F}^n$.
The vectors 
$$e_{1}=(1,0,\dots,0),e_2=(0,1,0,\dots,0),\dots,e_{n}=(0,\dots,0,1)$$
form the standard basis of $V$.
Denote by $C_{i}$ the kernel of the $i$-th coordinate functional $(x_{1},\dots,x_{n})\to x_{i}$; this is the hyperplane of $V$ spanned by all $e_j$ with $j\ne i$.

Every $k$-dimensional subspace of $V$ is a {\it linear} $[n,k]_{q}$ {\it code}.
Such a code $C$ is {\it non-degenerate} if
the restriction of every coordinate functional to $C$ is non-zero, in other words, there is no coordinate hyperplane $C_{i}$ containing $C$ \cite{TVN}.
The set of all non-degenerate linear $[n,k]_{q}$ codes ${\mathcal C}(n,k)_{q}$ is obtained by removing all ${\mathcal G}_{k}(C_{i})$ from ${\mathcal G}_{k}(V)$.

The graph of non-degenerate linear $[n,k]_q$ codes $\Gamma(n,k)_{q}$ is the subgraph of $\Gamma_{k}(V)$ induced by ${\mathcal C}(n,k)_{q}$.
This graph  is connected \cite{KP1}.
If $1<k<n-1$, then every automorphism of $\Gamma(n,k)_{q}$ is induced by a monomial semilinear automorphism of $V$
(a semilinear automorphism sending every $e_i$ to a scalar multiple of $e_j$), see \cite{KP2}.
Non-identity automorphisms of the field ${\mathbb F}_q$ exist only in the case when $q$ is a composite number, i.e.
every semilinear automorphism of $V$ is linear if $q$ is prime.

Suppose that $1<k<n-1$. The main result  of \cite{PankJACTA} states that every isomorphism of  $\Gamma(n,k)_{q}$ to a (not necessarily induced) subgraph of $\Gamma_{k}(V)$ 
can be uniquely extended to an automorphism of $\Gamma_{k}(V)$ if $q\ge 3$ or $k\ge 3$  and there is an isomorphism of $\Gamma(n,2)_{2}$ to a subgraph of $\Gamma_{2}(V)$
which does not have this property. An example of such non-extendable isomorphism will be presented in the next section. 

\begin{rem}{\rm
If $k=1,n-1$, then every injection of ${\mathcal C}(n,k)_{q}$ to ${\mathcal G}_{k}(V)$ is an isomorphism of $\Gamma(n,k)_{q}$ to a subgraph of $\Gamma_{k}(V)$;
it can be extended to a bijective transformation of ${\mathcal G}_{k}(V)$, but such an extension is not unique.
Recall that every bijective transformation of ${\mathcal G}_{k}(V)$ is an automorphism of $\Gamma_{k}(V)$ if $k=1,n-1$.
}\end{rem}

\section{Result}
In this section, we suppose that ${\mathbb F}={\mathbb F}_2$ is the field of two elements 
and consider the graph of non-degenerate linear $[n,2]_2$ codes $\Gamma(n,2)_2$, $n\ge 4$.

Let $Q$ be the $1$-dimensional subspace of $V$ containing the vector $(1,\dots,1)$.
For every proper subset $I\subset \{1,\dots,n\}$ we denote by $P_I$ 
the $1$-dimensional subspace of $V$ containing the vector whose $i$-coordinate is $1$ if $i\in I$ and $0$ if $i\not\in I$. 
For $I^c=\{1,\dots,n\}\setminus I$ we will write $P^I$ instead of $P_{I^c}$ 
when it will be convenient. Also, if $I=\{i_1,\dots,i_m\}$, then we write $P_{i_1,\dots,i_m}$ and $P^{i_1,\dots,i_m}$ instead of $P_I$ and $P^I$, respectively.
%The subspaces $P_{\{i\}}$ and $P^{\{i\}}$ will be denoted by $P_i$ and $P^i$, respectively.

Let $H$ be the hyperplane of $V$ spanned by $P^1,\dots,P^{n-1}$.
Then $Q\not\subset H$ (otherwise, $H$ contains all $P_i$ which is impossible). 
For every proper $I\subset \{1,\dots,n\}$ the $2$-dimensional subspace $P_I+P^I$ 
is an element of ${\mathcal C}(n,2)_2$ containing $Q$ which implies that precisely one of $P_I,P^I$ is contained in $H$.
As in \cite[Section 7]{PankJACTA}, we decompose ${\mathcal C}(n,2)_2$ in three subsets ${\mathcal A}, {\mathcal B}, {\mathcal C}$ as follows:
\begin{enumerate}
\item[$\bullet$] ${\mathcal A}$ consists of all elements of ${\mathcal C}(n,2)_2$ containing $Q$;
\item[$\bullet$] ${\mathcal B}$ is formed by all elements of ${\mathcal C}(n,2)_2$ contained in $H$;
\item[$\bullet$] ${\mathcal C}={\mathcal C}(n,2)_2\setminus({\mathcal A}\cup {\mathcal B})$.
\end{enumerate}

\begin{exmp}\label{exmp1}{\rm
The graph $\Gamma(4,2)_{2}$ consists of $13$ vertices.
The set ${\mathcal A}$ is formed by the following $7$ elements $Q+P$, where $P$ is a $1$-dimensional subspace of $H$:
$$Q+P^1=\left[
\begin{array}
{@{\hspace{2pt}}c@{\hspace{2pt}}c@{\hspace{2pt}}c@{\hspace{2pt}}c@{\hspace{2pt}}}
1&1&1&1\\
0&1&1&1\\
1&0&0&0\\
\end{array}
\right],
Q+P^2=\left[
\begin{array}
{@{\hspace{2pt}}c@{\hspace{2pt}}c@{\hspace{2pt}}c@{\hspace{2pt}}c@{\hspace{2pt}}}
1&1&1&1\\
1&0&1&1\\
0&1&0&0\\
\end{array}
\right],
Q+P^3=\left[
\begin{array}
{@{\hspace{2pt}}c@{\hspace{2pt}}c@{\hspace{2pt}}c@{\hspace{2pt}}c@{\hspace{2pt}}}
1&1&1&1\\
1&1&0&1\\
0&0&1&0\\
\end{array}
\right],
Q+P_4=\left[
\begin{array}
{@{\hspace{2pt}}c@{\hspace{2pt}}c@{\hspace{2pt}}c@{\hspace{2pt}}c@{\hspace{2pt}}}
1&1&1&1\\
0&0&0&1\\
1&1&1&0\\
\end{array}
\right],$$
$$
Q+P_{1,2}=\left[
\begin{array}
{@{\hspace{2pt}}c@{\hspace{2pt}}c@{\hspace{2pt}}c@{\hspace{2pt}}c@{\hspace{2pt}}}
1&1&1&1\\
1&1&0&0\\
0&0&1&1\\
\end{array}
\right],\;\;
Q+P_{1,3}=\left[
\begin{array}
{@{\hspace{2pt}}c@{\hspace{2pt}}c@{\hspace{2pt}}c@{\hspace{2pt}}c@{\hspace{2pt}}}
1&1&1&1\\
1&0&1&0\\
0&1&0&1\\
\end{array}
\right],\;\;
Q+P_{2,3}=\left[
\begin{array}
{@{\hspace{2pt}}c@{\hspace{2pt}}c@{\hspace{2pt}}c@{\hspace{2pt}}c@{\hspace{2pt}}}
1&1&1&1\\
0&1&1&0\\
1&0&0&1\\
\end{array}
\right]$$
(the rows of each matrix are non-zero vectors of the corresponding subspace).
The set ${\mathcal B}$ consists of the following elements $P^i+P^j$, where $i,j\in \{1,2,3\}$ are distinct:
$$
P^1+P^2=\left[
\begin{array}
{@{\hspace{2pt}}c@{\hspace{2pt}}c@{\hspace{2pt}}c@{\hspace{2pt}}c@{\hspace{2pt}}}
0&1&1&1\\
1&0&1&1\\
1&1&0&0\\
\end{array}
\right],
P^1+P^3=\left[
\begin{array}
{@{\hspace{2pt}}c@{\hspace{2pt}}c@{\hspace{2pt}}c@{\hspace{2pt}}c@{\hspace{2pt}}}
0&1&1&1\\
1&1&0&1\\
1&0&1&0\\
\end{array}
\right],
P^2+P^3=\left[
\begin{array}
{@{\hspace{2pt}}c@{\hspace{2pt}}c@{\hspace{2pt}}c@{\hspace{2pt}}c@{\hspace{2pt}}}
1&0&1&1\\
1&1&0&1\\
0&1&1&0\\
\end{array}
\right]
$$
Finally, ${\mathcal C}$ is formed by the following elements $P^i+P^4$ with $i\in \{1,2,3\}$:
$$P^1+P^4=\left[
\begin{array}
{@{\hspace{2pt}}c@{\hspace{2pt}}c@{\hspace{2pt}}c@{\hspace{2pt}}c@{\hspace{2pt}}}
0&1&1&1\\
1&1&1&0\\
1&0&0&1\\
\end{array}\right],
P^2+P^4=\left[
\begin{array}
{@{\hspace{2pt}}c@{\hspace{2pt}}c@{\hspace{2pt}}c@{\hspace{2pt}}c@{\hspace{2pt}}}
1&0&1&1\\
1&1&1&0\\
0&1&0&1\\
\end{array}\right],
P^3+P^4=\left[
\begin{array}{@{\hspace{2pt}}c@{\hspace{2pt}}c@{\hspace{2pt}}c@{\hspace{2pt}}c@{\hspace{2pt}}}
1&1&0&1\\
1&1&1&0\\
0&0&1&1\\
\end{array}
\right].
$$
Note that every $P^i+P^j$ contains $P_{i,j}$.}\end{exmp}

Let $X\in {\mathcal C}$. It intersects $H$ in a $1$-dimensional subspace $P_T$
and we suppose that $P_I,P_J$  are the remaining two $1$-dimensional subspaces of $X$.
Then
$$
I\cup J=\{1,\dots,n\}
$$
(since $X\in {\mathcal C}(n,2)_2$)
and $$I\cap J\ne \emptyset$$
(otherwise, $J=I^c$ and $X$ contains $Q$ which is impossible).
It is easy to see  that
$$T=(I\cap J)^c,$$ 
i.e. $X$ intersects $H$ in $P^{I\cap J}$.
The hyperplane $H$ does not contain $P_I,P_J$ and, consequently, $P^I,P^J$ both are contained in $H$. 
Denote by $X^c$ the $2$-dimensional subspace $P^I+P^J$.
It is clear that $X^c\subset H$.
Since $$I^c\cap J^c=\emptyset,$$
the $1$-dimensional subspace of $X^c$ distinct from $P^I$ and $P^J$  is $P_{I^c\cup J^c}=P^{I\cap J}$.
Therefore, $X^c$ is contained in $C_i$ for every $i\in I\cap J$, i.e.
$X^c$ does not belong to ${\mathcal C}(n,2)_2$.
The $1$-dimensional subspaces of $X$ and $X^c$ are 
$$P_I,P_J, P^{I\cap J}\;\mbox{ and }\;P^I,P^J, P^{I\cap J}$$ 
(respectively), i.e. $P^{I\cap J}$ is the intersection of $X$ and $X^c$.

\begin{exmp}\label{exmp2}{\rm
As in Example \ref{exmp1}, we suppose that $n=4$. Then
$$(P^1+P^4)^c=P^1+P_4=\left[
\begin{array}
{@{\hspace{2pt}}c@{\hspace{2pt}}c@{\hspace{2pt}}c@{\hspace{2pt}}c@{\hspace{2pt}}}
0&1&1&1\\
0&0&0&1\\
0&1&1&0\\
\end{array}\right],$$
$$(P^2+P^4)^c=P^2+P_4=\left[
\begin{array}
{@{\hspace{2pt}}c@{\hspace{2pt}}c@{\hspace{2pt}}c@{\hspace{2pt}}c@{\hspace{2pt}}}
1&0&1&1\\
0&0&0&1\\
1&0&1&0\\
\end{array}
\right],$$
$$(P^3+P^4)^c=P^3+P_4=\left[
\begin{array}{@{\hspace{2pt}}c@{\hspace{2pt}}c@{\hspace{2pt}}c@{\hspace{2pt}}c@{\hspace{2pt}}}
1&1&0&1\\
0&0&0&1\\
1&1&0&0\\
\end{array}
\right].
$$
Note that for every $i\in \{1,2,3\}$ the subspaces $P^i+P^4$ and $(P^i+P^4)^c=P^i+P_4$ contain $P_{i,4}$ and $P^{i,4}$, respectively.
}\end{exmp}

Consider the map $$h:{\mathcal C}(n,2)_2\to {\mathcal G}_2(V)$$
which leaves fixed  every element of ${\mathcal A}\cup {\mathcal B}$ and sends every $X\in {\mathcal C}$ to $X^c$.
This map is injective (since $X^c\not\in {\mathcal C}(n,2)_2$ for every $X\in {\mathcal C}$ and $X^c\ne Y^c$ for distinct $X,Y\in {\mathcal C}$).
The map $h$ preserves the adjacency  relation only in one direction \cite[Section 7]{PankJACTA}, i.e. 
it sends adjacent elements of ${\mathcal C}(n,2)_2$ to adjacent elements of ${\mathcal G}_2(V)$ and there are non-adjacent $X,Y\in {\mathcal C}(n,2)_2$ such that $h(X),h(Y)$ are adjacent.
In the subgraph of $\Gamma_2(V)$ induced by the image of $h$ we remove the edge connecting $h(X)$ and $h(Y)$ if $X,Y\in {\mathcal C}(n,2)_2$ are not adjacent.
We obtain a certain subgraph of $\Gamma_2(V)$ which is not an induced subgraph. 
The map $h$ is an isomorphism of $\Gamma(n,2)_2$ to this subgraph.
Since $h$ is adjacency preserving only in one direction, it cannot be extended to an automorphism of $\Gamma_2(V)$.

\begin{theorem}\label{theorem-main}
If an isomorphism $f$ of $\Gamma(n,2)_{2}$ to a subgraph of $\Gamma_{2}(V)$
cannot be extended to an automorphism of $\Gamma_{2}(V)$, then 
$f=gh$ for a certain automorphism $g$ of $\Gamma_{2}(V)$.
\end{theorem}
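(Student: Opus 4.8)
The plan is to produce a single automorphism $g_0\in\mathrm{Aut}(\Gamma_2(V))$ with $g_0f=h$ and then set $g:=g_0^{-1}$. Throughout I use that $f$ is an injection sending edges to edges (but possibly some non-edges to edges), that the maximal cliques of $\Gamma_2(V)$ are the stars $[U\rangle$ and the tops $\langle W]$, and that over ${\mathbb F}_2$ a star has $2^{n-1}-1$ vertices, a top has exactly $7$, and two distinct stars, or two distinct tops, meet in at most one vertex. The anchor is ${\mathcal A}=[Q\rangle$: since $Q$ has full support, every plane through $Q$ is non-degenerate, so ${\mathcal A}$ is a full star lying inside ${\mathcal C}(n,2)_2$, identified with ${\mathcal G}_1(H)$ via $Q+P\mapsto P$. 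Hence $f({\mathcal A})$ is a clique with $2^{n-1}-1$ vertices; for $n\ge5$ this exceeds $7$, so $f({\mathcal A})$ lies in no top and, having full-star size, equals a full star $[U_0\rangle$. Reading off the collinearity of $H$ from the common neighbours of triples in ${\mathcal A}$ (as in the extendable-direction analysis of \cite{PankJACTA}), $f|_{\mathcal A}$ respects the projective structure of $H$, so by the fundamental theorem of projective geometry it is induced by a semilinear—hence, over ${\mathbb F}_2$, linear—isomorphism, extendable to a semilinear automorphism of $V$ with $U_0\mapsto Q$. Composing $f$ with the inverse of the induced graph automorphism, I may assume $f$ fixes ${\mathcal A}$ pointwise.

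The core is a local analysis relative to the fixed star. Each vertex $v\notin{\mathcal A}$ has exactly three neighbours in ${\mathcal A}$, and these three planes through $Q$ span a common $3$-space $W_v$; a direct computation shows that the planes off $Q$ adjacent to all three are precisely the four planes of $W_v$ not through $Q$, namely the unique in-$H$ plane $W_v\cap H$ and three planes meeting $H$ in a single line, and that all four have exactly those same three ${\mathcal A}$-neighbours. Since $f$ fixes ${\mathcal A}$ and preserves edges, $f(v)$ is one of these four (the option $f(v)\in{\mathcal A}$ is barred by injectivity). For $Y\in{\mathcal B}$ one has $W_Y\cap H=Y$, so the distinguished in-$H$ candidate is $Y$ itself; for $X\in{\mathcal C}$ with $X=P_I+P_J+P^{I\cap J}$ one computes $W_X\cap H=X^c=P^I+P^J$, a degenerate plane, while $X$ is one of the three off-$H$ candidates. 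Crucially, both $X$ and $X^c$ contain the line $P^{I\cap J}=X\cap H$, whereas the other two candidates for $X$ do not.

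It remains to pin down $f$ on ${\mathcal B}\cup{\mathcal C}$, which I do by a rigidity argument run simultaneously over these two sets to avoid circularity. Using the neighbours of a vertex inside ${\mathcal B}$—for a plane meeting $H$ in the line $P^{I\cap J}$, the pencil of planes of $H$ through that line—one excludes every candidate that misses the relevant $H$-line; this discards the two \emph{mixed} candidates for each $X\in{\mathcal C}$, leaving $f(X)\in\{X,X^c\}$, and, applied to ${\mathcal B}$, forces $f|_{\mathcal B}=\mathrm{id}$. Finally the fold is global: if $f(X)=X^c$ for no $X$, then $f$ is the identity on all of ${\mathcal C}(n,2)_2$ and so extends to $\mathrm{id}_{\Gamma_2(V)}$, contrary to hypothesis; and for an edge $XY$ of ${\mathcal C}$ the forced adjacency $f(X)\sim f(Y)$, together with the fact that $X\mapsto X^c$ (but not the mixed pairings) preserves such edges, links the binary choices across the connected graph on ${\mathcal C}$ into the single alternative \emph{all folded}. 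Hence $f=h$ and therefore $f=g_0^{-1}h$.

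I expect two obstacles. The principal one is the rigidity of the third paragraph: one-directional edge preservation is strictly weaker than an isomorphism onto an induced subgraph, so the exclusion of off-$H$ and mixed candidates, the disentangling of $f|_{\mathcal B}=\mathrm{id}$ from the analysis on ${\mathcal C}$, and the propagation forbidding a partial fold must all be carried out while controlling which \emph{extra} edges $f$ is allowed to create. The second difficulty is the anchoring step when $n=4$, where stars and tops both have $7$ vertices, so $f({\mathcal A})$ could a priori be a top; this case requires a separate treatment of the $13$-vertex graph of Example \ref{exmp1}, in which the orthocomplementary correlation available for $n=2k=4$ must be admitted among the automorphisms $g$ and the top-alternative either excluded or shown to lead, after such a correlation, to the same normal form $h$.
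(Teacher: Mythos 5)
Your overall strategy (normalize $f$ so that it fixes the star ${\mathcal S}(Q)$ pointwise, then localize each remaining image inside the $3$-space $W_v=Q+v$) is sound, and the four-candidate lemma in your second paragraph is correct and genuinely different from the paper's route: the paper instead proves $f(A_I)\subset S_I$ by induction, reduces to the subgraphs $\Gamma_I\cong\Gamma(|I|+1,2)_2$ spanned by $S_I$, and runs an induction on $n$ with the $13$-vertex case $n=4$ as base. However, your argument has a concrete false step at its crux. In the final ``global fold'' you assert that for an edge $XY$ of ${\mathcal C}$ the mixed pairings do not preserve adjacency, so that the binary choices $f(X)\in\{X,X^c\}$ propagate to ``all or nothing.'' This is not true: if $X,Y\in{\mathcal C}$ satisfy $X\cap H=Y\cap H$ (for instance, $n=5$, $X=P_{1,2,3}+P_{1,4,5}$ and $Y=P_{1,2,4}+P_{1,3,5}$, both meeting $H$ in $P^1$), then $X$ and $Y$ are adjacent and $Y^c$ contains $Y\cap H=X\cap H$, so $X$ is adjacent to $Y^c$ as well; more generally $X\sim Y^c$ whenever $X\cap H\subset Y^c$. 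Since $f$ is only required to send edges to edges (extra edges in the image are allowed), nothing in your argument rules out a partial fold along such edges, and the dichotomy you need does not follow. One would have to show that the subgraph of ${\mathcal C}$ consisting of edges on which the mixed pairing genuinely breaks adjacency is connected (or find some other propagation mechanism, as the paper does by passing through the hyperplane sections $S_J$ with $|J|=n-2$ and invoking the inductive hypothesis).

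Two further steps are asserted rather than proved. First, the normalization: ${\mathcal A}={\mathcal S}(Q)$ induces a complete subgraph, so collinearity in ${\mathcal G}_1(V/Q)$ cannot be read off from ${\mathcal A}$ alone; it must be extracted from common neighbours outside ${\mathcal A}$, and since $f$ may create extra adjacencies this needs the kind of careful bookkeeping the paper supplies in Lemmas \ref{lemma1} and \ref{lemma2} (building $S_I$ from witnesses $P^i+P^J$). Invoking the fundamental theorem of projective geometry here is premature. Second, you acknowledge but do not resolve the circularity in establishing $f|_{\mathcal B}=\mathrm{id}$ simultaneously with the exclusion of the two ``mixed'' candidates on ${\mathcal C}$; since all four candidates for $f(v)$ have identical ${\mathcal A}$-neighbourhoods, the ${\mathcal A}$-anchor alone cannot break this tie, and some ordering of the argument (the paper's induction on $|I|$ and on $n$ serves exactly this purpose) is indispensable. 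As it stands the proposal is a plausible plan with a correct local analysis, but the global rigidity — the actual content of the theorem — is not established.
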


\begin{rem}\label{rem-morph}{\rm
A {\it morphism} (a lineation) of a projective space is a transformation of the set of points sending lines to subsets of lines.
A morphism is an automorphism (a collineation) if it is bijective and the inverse map also is a morphism.
By the Fundamental Theorem of Projective Geometry, every automorphism of a projective space (associated to a vector space) is induced 
by a semilinear automorphism of the corresponding vector space. 
A morphism is {\it non-degenerate} if its image is not contained in a line and the image of every line contains at least three points.
Every non-degenerate morphism is induced by a generalized semilinear map associated to a place \cite{Faure}. 
Degenerate morphisms of projective spaces are not determined.
If a projective space is over the field of two elements, then every line contains precisely three points and, consequently, every non-injective morphism is degenerate. 
Consider the transformation of ${\mathcal G}_1(V)$ which leaves fixed $Q$ and all $P_I\subset H$ and sends every $P_I\not\subset H$ to $P^I\subset H$. 
This is a non-injective morphism of the projective space associated to $V$. This morphism induces $h$. 
}\end{rem}

\section{Maximal cliques}
A subset in the vertex set of a graph is called a {\it clique} if any two distinct vertices from this subset are connected by an edge.
A clique ${\mathcal X}$ is  {\it maximal} if every clique containing ${\mathcal X}$ coincides with ${\mathcal X}$.
Every maximal clique of the Grassmann graph $\Gamma_{k}(V)$, $1<k<n-1$ (${\mathbb F}$ is an arbitrary field) is of one of the following types:
\begin{enumerate}
\item[$\bullet$] the {\it star} ${\mathcal S}(X)$, where $X$ is a $(k-1)$-dimensional subspace of $V$; 
it consists of all $k$-dimensional subspaces containing $X$;
\item[$\bullet$] the {\it top} ${\mathcal G}_{k}(Y)$, where $Y$ is a $(k+1)$-dimensional subspace of $V$.
\end{enumerate}

Suppose that ${\mathbb F}={\mathbb F}_q$ is the field of $q$ elements and $1<k<n-1$.
The intersection of ${\mathcal C}(n,k)_q$ with a maximal clique of $\Gamma_k(V)$ is a clique of $\Gamma(n,k)_q$,
but such a clique need not to be maximal.  The intersection of ${\mathcal C}(n,k)_q$ with a star or a top of $\Gamma_k(V)$
will be called a {\it star} or, respectively, a {\it top} of $\Gamma(n,k)_q$ only when it is a maximal clique of $\Gamma(n,k)_q$.

For every $(k-1)$-dimensional subspace $X\subset V$ we define 
$${\mathcal S}^c(X)={\mathcal C}(n,k)_q\cap {\mathcal S}(X).$$
If $q\ge 3$, then ${\mathcal S}^c(X)$ is a star of $\Gamma(n,k)_q$ for each $(k-1)$-dimensional subspace $X$ \cite[Proposition 1]{KP2}.
For $q=2$ the same holds if and only if the number of coordinate hyperplanes $C_i$ containing $X$ is not greater than $n-k-1$ \cite[Proposition 2]{KP2}.
We have ${\mathcal S}^c(X)={\mathcal S}(X)$ only for $X\in {\mathcal C}(n,k-1)_q$; such a clique is said to be a {\it maximal star} of $\Gamma(n,k)_q$.

\begin{exmp}\label{exmp3}{\rm
Suppose that $q=k=2$.
For a $1$-dimensional subspace $P\subset V$ the set ${\mathcal S}^c(P)$ is a star of $\Gamma(n,2)_2$ if and only if $P=Q$ or $P=P_I$ with $|I|\ge 3$.
%(recall that $Q$ is the $1$-dimensional subspace containing the vector $(1,\dots,1)$).
Hence ${\mathcal S}(Q)$ is the unique maximal star of $\Gamma(n,2)_2$;
it coincides with the set ${\mathcal A}$ considered in the previous section.
}\end{exmp}

\begin{rem}{\rm
Tops of $\Gamma(n,k)_q$ exist. For example, the maximal clique of $\Gamma(n,2)_2$ formed by
$$Q+P^1,\;Q+P^2,\; Q+P_{1,2},\;P^1+P^2$$
is a top.
It must be pointed out that every star of $\Gamma(n,k)_q$ is not a top and vice versa \cite[Proposition 5]{PankJACTA}.
}\end{rem}

We recall some reasonings from \cite{PankJACTA}.

Let $f$ be an isomorphism of $\Gamma(n,k)_q$ to a subgraph of $\Gamma_k(V)$, $1<k<n-1$.
By \cite[Lemma 9]{PankJACTA},  $f$ sends every maximal star of $\Gamma(n,k)_q$ to a star of $\Gamma_k(V)$
or $n=2k$ and $f$ transfers all stars of $\Gamma(n,k)_q$ to tops of $\Gamma_k(V)$.
In the second case, the composition of the orthocomplementary map (which is an automorphism of $\Gamma_k(V)$ if $n=2k$) and $f$ sends  every maximal star of $\Gamma(n,k)_q$ to a star of $\Gamma_k(V)$.
For this reason, it is sufficient to consider the first case only. 
In this case, there is a sequence of maps
$$f_i:{\mathcal C}(n,i)_q\to {\mathcal G}_i(V),\;\;\;i=k,\dots,1$$
such that $f_k=f$, each $f_i$ is an isomorphism of $\Gamma(n,i)_q$ to a subgraph of $\Gamma_i(V)$ and
$$f_i({\mathcal S}(X))={\mathcal S}(f_{i-1}(X))$$
for every $X\in {\mathcal C}(n,i-1)_q$ with $i\ge 2$.
By \cite[Lemma 10]{PankJACTA},
each $f_i$, $i\ge 2$ transfers every star of $\Gamma(n,k)_q$ to a clique contained in a star of $\Gamma_k(V)$; such a star of $\Gamma_k(V)$ is unique  
(since $f_i$ is injective and the intersection of two distinct stars of $\Gamma_k(V)$ contains at most one element). 

If $q\ge 3$, then every $f_j$, $j\in \{1,\dots,k-1\}$ can be extended to a transformation $g_j$ of ${\mathcal G}_{j}(V)$ such that
$$f_{j+1}({\mathcal S}^c(X))\subset  {\mathcal S}^c(g_j(X))$$
for every $X\in {\mathcal G}_{j}(V)$.
The map $g_1$ is an automorphism of the projective space associated to $V$ and, consequently,
by the Fundamental Theorem of Projective Geometry, it is induced by a semilinear automorphism of $V$
which implies that $f$ can be uniquely extended to an automorphism of $\Gamma_k(V)$ \cite[Section 5]{PankJACTA}.

Suppose that $q=2$. 
In this case, ${\mathcal C}(n,1)_2=\{Q\}$.
Denote by ${\mathcal G}'_1(V)$ the set consisting of $Q$ and all $1$-dimensional subspaces $P_I$ satisfying $|I|\ge 3$.
Then ${\mathcal S}^c(P)$ is a star of $\Gamma(n,2)_2$ if and only if $P\in {\mathcal G}'_1(V)$.
For every $P\in {\mathcal G}'_1(V)$ there is a unique $1$-dimensional subspace $g_1(P)$ such that
$$f_2({\mathcal S}^c(P))\subset {\mathcal S}(g_1(P)).$$
If $k\ge 3$, then the map 
$$g_1:{\mathcal G}'_1(V)\to {\mathcal G}_1(V)$$
can be uniquely extended to an automorphism of the projective space associated to $V$ which implies $f$ can be uniquely extended to an automorphism of $\Gamma_k(V)$ \cite[Section 6]{PankJACTA}.
For $k=2$ this fails: $g_1$ cannot be extended to an automorphism of the projective space associated to $V$ if $f=h$ ($h$ as in the previous section);
furthermore, $g_1$ is non-injective if $n\ge 5$ and $f=h$.

\section{Proof of Theorem \ref{theorem-main}}
Let $f$ be an isomorphism of $\Gamma(n,2)_2$, $n\ge 4$ to a subgraph of $\Gamma_2(V)$.
By the previous section,
we can assume that $f$ sends the unique maximal star of $\Gamma(n,2)_2$  to a star of $\Gamma_2(V)$.
Then it transfers every star of $\Gamma(n,2)_2$ to a clique contained in a star of $\Gamma_2(V)$ and induces the map $g_1$ of ${\mathcal G}'_1(V)$ to ${\mathcal G}_1(V)$ such that
$$f({\mathcal S}^c(P))\subset {\mathcal S}(g_1(P))$$
for every $P\in {\mathcal G}'_1(V)$.

For every proper $I\subset \{1,\dots,n\}$ we define
$$A_I=Q+P_I.$$
Then $A_I=Q+P_I=Q+P^I=A_{I^c}$ and one of $I,I^c$ is contained in $\{1,\dots,n-1\}$.
%we can assume that $I$ is a non-empty subset of $\{1,\dots,n-1\}$.
The maximal star ${\mathcal A}={\mathcal S}(Q)$ is formed by all $A_I$ such that $I$ is a non-empty subset of $\{1,\dots,n-1\}$.
If $I=\{i_1,\dots,i_m\}$, then we write $A_{i_1,\dots,i_m}$ instead of $A_I$.

For every non-empty $I\subset \{1,\dots,n-1\}$ we denote by $S_I$ the subspace of $V$ spanned by all $f(A_i)$ with $i\in I$.
As above, we will use the symbol $S_{i_1,\dots,i_m}$ instead of $S_I$ if $I=\{i_1,\dots,i_m\}$.

\begin{lemma}\label{lemma1}
$f(A_I) \subset S_I$ for every non-empty $I\subset \{1,\dots,n-1\}$.
\end{lemma}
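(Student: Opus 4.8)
```latex
The plan is to prove this by induction on the cardinality $|I|$, exploiting the fact that $f$ sends stars of $\Gamma(n,2)_2$ into stars of $\Gamma_2(V)$ (equivalently, that $f$ respects the rank-one incidence structure recorded by $g_1$). The base case $|I|=1$ is trivial, since $S_i=f(A_i)$ by definition, so $f(A_i)\subset S_i$ holds tautologically. The whole content is in the inductive step: assuming $f(A_J)\subset S_J$ for every non-empty $J\subsetneq I$, I must show $f(A_I)\subset S_I$.

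First I would unpack how adjacency inside the maximal star ${\mathcal A}={\mathcal S}(Q)$ is reflected by $f$. Two elements $A_I,A_J$ of ${\mathcal A}$ are adjacent in $\Gamma(n,2)_2$ precisely when $A_I\cap A_J$ is $1$-dimensional, i.e.\ when they share the common line $Q$; since both contain $Q$, \emph{all} elements of ${\mathcal A}$ are pairwise adjacent, so ${\mathcal A}$ is a clique and $f({\mathcal A})$ is a clique inside the single star $f({\mathcal A})\subset{\mathcal S}(g_1(Q))$. This already tells me that all the subspaces $f(A_i)$ share a common $1$-dimensional subspace, namely $g_1(Q)$, and hence $\dim S_I\le |I|+1$. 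The key mechanism I expect to drive the induction is the following: for a given $I$ with $|I|\ge 2$, pick $i\in I$ and set $J=I\setminus\{i\}$; then $A_I$ is adjacent to both $A_J$ and $A_i$ (all lie in ${\mathcal A}$), but more usefully $A_I$ lies in a star ${\mathcal S}^c(P_K)$ for a suitable line $P_K$ with $|K|\ge 3$ that also contains $A_J$ and $A_i$, so that $f(A_I)$, $f(A_J)$, $f(A_i)$ all lie in the common star ${\mathcal S}(g_1(P_K))$. Intersecting the condition $f(A_J)\subset S_J$ (induction hypothesis) with the information coming from this shared star should force $f(A_I)$ into $S_I=S_J+f(A_i)$.

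Concretely, the engine is that $f$ carries the star ${\mathcal S}^c(P)$ into ${\mathcal S}(g_1(P))$, so whenever several of the $A_\bullet$ lie on a common line $P$ (i.e.\ contain $P$ as a $1$-dimensional subspace), their $f$-images share the line $g_1(P)$. I would therefore identify, for each pair $(J,i)$ as above, the lines $P$ common to $A_J$, $A_i$ and $A_I$, translate co-punctuality under $f$ into the statement that $g_1(P)\subset f(A_I)\cap f(A_J)$ and $g_1(P)\subset f(A_I)\cap f(A_i)$, and then observe that a $2$-dimensional $f(A_I)$ meeting the $|J|+1$-dimensional $S_J$ and the line $f(A_i)$ in prescribed lines must be contained in their span $S_J+f(A_i)=S_I$. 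Writing $f(A_I)=\langle u,v\rangle$ with $u\in f(A_J)\subset S_J$ forced by one incidence and $v\in f(A_i)\subset S_i\subset S_I$ by another, gives $f(A_I)\subset S_I$.

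The main obstacle I anticipate is dimensional degeneracy: the subspaces $S_I$ need not have the ``expected'' dimension $|I|+1$, because $f$ is only an isomorphism onto a (non-induced) subgraph and $g_1$ may be non-injective (as the excerpt warns happens for $f=h$ when $n\ge 5$). When $S_J$ is smaller than expected, the intersection argument placing a line of $f(A_I)$ inside $S_J$ can collapse, and I must be careful that the two lines $f(A_I)\cap S_J$ and $f(A_I)\cap f(A_i)$ are genuinely distinct (so that their span recovers all of $f(A_I)$) rather than coinciding. I would handle this by a separate case analysis when these forced lines coincide, using a third element $A_{I\setminus\{i'\}}$ for a different $i'\in I$ to supply an independent constraint, or by invoking the explicit combinatorial description of which $A_\bullet$ lie on a common line $P_K$. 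Verifying that such a second line always exists and is independent is the delicate point that makes the induction go through.
```
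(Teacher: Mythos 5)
There is a genuine gap, and it is located exactly where you sense the difficulty. Your engine is to find lines $P$ common to $A_I$, $A_J$ and $A_i$ (or common to pairs of them) and push the containment through the stars ${\mathcal S}^c(P)$. But no such lines exist other than $Q$: the $1$-dimensional subspaces of $A_I=Q+P_I$ are exactly $Q$, $P_I$, $P^I$, and for distinct non-empty $I,J\subset\{1,\dots,n-1\}$ one checks that $P_I,P^I,P_J,P^J$ are four distinct lines, so $A_I\cap A_J=Q$. Consequently $f(A_I)\cap f(A_J)=g_1(Q)=f(A_I)\cap f(A_i)$, and the two ``forced lines'' you want to span $f(A_I)$ with are not distinct in an exceptional degenerate case --- they coincide \emph{always}, both being $g_1(Q)$. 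The span you compute is just the line $g_1(Q)$, which gives nothing. Your proposed fixes (using a third $A_{I\setminus\{i'\}}$, or the combinatorics of which $A_\bullet$ share a line $P_K$) run into the same wall, since any two distinct elements of ${\mathcal A}$ meet only in $Q$.

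The missing idea is an auxiliary vertex \emph{outside} ${\mathcal A}$. The paper takes $i\in I$, $J=I\setminus\{i\}$, and considers $Z=P^i+P^J\in{\mathcal C}(n,2)_2$. This $Z$ is adjacent to $A_i$ (sharing $P^i$) and to $A_J$ (sharing $P^J$), and --- the key computation --- $Z$ contains $P_I$, so it is also adjacent to $A_I$. Since $Z\notin{\mathcal A}$ and $f$ is injective with $f({\mathcal A})={\mathcal S}(g_1(Q))$, the image $f(Z)$ does \emph{not} contain $g_1(Q)$; hence $f(Z)$ meets $f(A_i)$ and $f(A_J)$ (both inside $S_I$ by the inductive hypothesis) in two \emph{distinct} lines, forcing $f(Z)\subset S_I$. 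Then $f(A_I)$ contains the two distinct lines $g_1(Q)$ and $f(A_I)\cap f(Z)$, both inside $S_I$, so $f(A_I)\subset S_I$. In short: the distinctness of intersection lines that your argument needs is obtained not among the images of elements of ${\mathcal A}$ (where it fails), but by pairing an element of ${\mathcal A}$ with a neighbour outside ${\mathcal A}$, whose image is guaranteed to avoid $g_1(Q)$. Without introducing such a vertex your induction does not close.
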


\begin{proof} We prove the statement by induction.
The case $|I|=1$ is trivial: $f(A_i)=S_i$ for every $i\in \{1,\dots,n-1\}$.
Suppose that $m$ is an integer such that $1\le m<n-1$ and the inclusion $f(A_I) \subset S_I$ holds for every $I\subset \{1,\dots,n-1\}$
containing not greater than $m$ elements.
Consider an $(m+1)$-element subset $I\subset \{1,\dots n-1\}$.
We have
$$f(A_J)\subset S_J\subset S_I$$
for every proper $J\subset I$.
If $i\in I$ and $J=I\setminus\{i\}$, then 
$$P^i+P^J\in {\mathcal C}(n,2)_2$$ is adjacent to $A_i$ and $A_J$
which implies that $f(P^i+P^J)$ is adjacent to $f(A_i)$ and $f(A_J)$.
The map $f$ is injective,
$$f({\mathcal A})={\mathcal S}(g_1(Q))\;\mbox{ and }\;P^i+P^J\not\in {\mathcal A},$$
i.e. $f(P^i+P^J)$ does not contain $g_1(Q)$.
On the other hand, $g_1(Q)$ is contained in both $f(A_i)$ and $f(A_J)$.
This means that $f(P^i+P^J)$ intersects 
$$f(A_i)\subset S_I\;\mbox{ and }\;f(A_J)\subset S_I$$ in distinct $1$-dimensional subspaces.
Therefore, 
$$f(P^i+P^J)\subset S_I.$$
Observe that $P^i+P^J$ contains $P_I$ (since $I=J\cup \{i\}$) which means that $P^i+P^J$  is adjacent to $A_I$ and, consequently,
$f(P^i+P^J)$ is adjacent to $f(A_I)$. 
So, $f(A_I)$ intersects $S_I$ in two distinct $1$-dimensional subspaces: one of them is $g_1(Q)$ and the other is the intersection of  $f(A_I)$ with $f(P^i+P^J)$.
Hence $f(A_I)\subset S_I$.
\end{proof}

By Lemma \ref{lemma1}, the subspace $S_{1,\dots,n-1}$ contains every element of $f({\mathcal A})$.
Since $f({\mathcal A})$ is a start of $\Gamma_2(V)$,  we obtain that $S_{1,\dots,n-1}=V$.

The equality $f({\mathcal A})={\mathcal S}(g_1(Q))$ shows that $g_1(Q)\ne g_1(P)$ for every $P\in {\mathcal G}'_1(V)$ distinct from $Q$
(since $f$ is injective).
We have 
$${\mathcal A}\cap {\mathcal S}^c(P^i)=\{A_i\}$$
and, consequently,  
$${\mathcal S}(g_1(Q))\cap {\mathcal S}(g_1(P^i))=\{f(A_i)\}$$
which implies that
\begin{equation}\label{eq1}
f(A_i)=g_1(Q)+g_1(P^i),\;\;\;i\in\{1,\dots,n\}.
\end{equation}
Since $S_{1,\dots,n-1}=V$ is spanned by all $f(A_i)$, $i\in \{1,\dots,n-1\}$, \eqref{eq1} shows that
\begin{equation}\label{eq2}
g_1(Q)+g_1(P^1)+\dots+g_1(P^{n-1})=V.
\end{equation}
In particular, $g_1(P^1),\dots,g_1(P^{n-1})$ are mutually distinct. 
Then
$${\mathcal S}^c(P^i)\cap {\mathcal S}^c(P^j)=\{P^i+P^j\},$$
$${\mathcal S}^c(g_1(P^i))\cap {\mathcal S}^c(g_1(P^j))=\{f(P^i+P^j)\}$$
which means that 
\begin{equation}\label{eq3}
f(P^i+P^j)=g_1(P^i)+g_1(P^j),\;\;\;i,j\in \{1,\dots,n-1\}, i\ne j.
\end{equation}
The equalities $Q+P^1+\dots+P^{n-1}=V$ and \eqref{eq2} imply the existence of a linear automorphism of $V$ which sends $g_1(Q)$ to $Q$ and $g_1(P^i)$ to $P^i$
for every $i\in \{1,\dots,n-1\}$. Let $g$  be the associated automorphism of $\Gamma_2(V)$.
We replace $f$ by $gf$ and obtain that  $g_1$ leaves fixed $Q$ and all $P^i$ with $i\in \{1,\dots,n-1\}$.
By \eqref{eq1} and \eqref{eq3}, 
$$f(A_i)=A_i$$
for every $i\in \{1,\dots,n-1\}$ and 
$$f(P^i+P^j)=P^i+P^j$$
for any distinct $i,j\in \{1,\dots,n-1\}$. 
Also,  
\begin{equation}\label{eq4}
S_I=Q+\sum_{i\in I}P^i=Q+\sum_{i\in I}P_i
\end{equation}
for every non-empty $I\subset \{1,\dots,n-1\}$.

\begin{lemma}\label{lemma2}
The restriction of $f$ to ${\mathcal A}$ is identity.
\end{lemma}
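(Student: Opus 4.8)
The plan is to show $f(A_I)=A_I$ for every nonempty $I\subseteq\{1,\dots,n-1\}$ by induction on $|I|$; the case $|I|=1$ is precisely the normalization $f(A_i)=A_i$ recorded before the lemma, so I would take $|I|\ge 2$ and assume the statement for all proper nonempty subsets of $I$.

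Two facts about $f(A_I)$ drive the argument. First, since $A_I\in{\mathcal A}={\mathcal S}(Q)$ and $f({\mathcal A})={\mathcal S}(g_1(Q))={\mathcal S}(Q)$ (here $g_1(Q)=Q$ after the normalization), the image $f(A_I)$ is a $2$-dimensional subspace containing $Q$; write $f(A_I)=Q+P$ with $P$ a $1$-dimensional subspace. Second, Lemma \ref{lemma1} gives $f(A_I)\subset S_I$, where by \eqref{eq4} we have $S_I=Q+\sum_{i\in I}P_i$, of dimension $|I|+1$. In particular $P\subset S_I$.

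The key step is to identify the members of ${\mathcal S}(Q)$ lying inside $S_I$. Writing $q=(1,\dots,1)$, every nonzero vector of $S_I$ has the form $\varepsilon q+\sum_{i\in I}c_i e_i$ with $\varepsilon,c_i\in{\mathbb F}_2$; adding $q$ replaces a generator of a code through $Q$ by another generator of the same code, so modulo $Q$ we may take $\varepsilon=0$, and then $Q+\langle\sum_{i}c_i e_i\rangle=A_J$ for $J=\{i\in I:c_i=1\}$. Hence the elements of ${\mathcal S}(Q)$ contained in $S_I$ are exactly the codes $A_J$ with $\emptyset\ne J\subseteq I$ (the count $2^{|I|}-1$ on both sides shows $J\mapsto A_J$ is bijective). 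Therefore $f(A_I)=A_J$ for some nonempty $J\subseteq I$. If $J$ were a proper subset of $I$, the induction hypothesis would give $f(A_J)=A_J=f(A_I)$, contradicting the injectivity of $f$; so $J=I$ and $f(A_I)=A_I$. The case $I=\{1,\dots,n-1\}$, where $S_I=V$, is settled by the same elimination, every proper subcode being already fixed.

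I do not expect a genuine obstacle here: the only delicate point is the exact description of ${\mathcal S}(Q)\cap{\mathcal G}_2(S_I)$, which is elementary once the ${\mathbb F}_2$-vectors are written out, after which Lemma \ref{lemma1}, the normalization $g_1(Q)=Q$, and the injectivity of $f$ close the induction. Notably this uses no adjacencies beyond those already exploited in the proof of Lemma \ref{lemma1}.
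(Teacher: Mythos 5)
Your proof is correct and is essentially the paper's own argument: the same induction on $|I|$, using Lemma \ref{lemma1} together with \eqref{eq4} to see that $f(A_I)$ must be some $A_J$ with $\emptyset\ne J\subseteq I$, and then injectivity of $f$ plus the inductive hypothesis to force $J=I$. The only difference is that you spell out the elementary verification that the elements of ${\mathcal S}(Q)$ contained in $S_I$ are exactly the $A_J$ with $J\subseteq I$, which the paper simply reads off from \eqref{eq4}.
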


\begin{proof}
We have $f({\mathcal A})={\mathcal A}$ (since $g_1(Q)=Q$).
Show that $f(A_I)=A_I$ for every non-empty $I\subset \{1,\dots,n-1\}$ by induction.
The statement holds if $|I|=1$. 
Suppose that $m$ is an integer such that $1\le m<n-1$ and $f(A_I)=A_I$ for every $I\subset \{1,\dots,n-1\}$ containing at most $m$ elements.
Consider an $(m+1)$-element subset  $I\subset \{1,\dots,n-1\}$.
The equality \eqref{eq4} shows that $A_J$, $J\subset \{1,\dots,n-1\}$, is contained in $S_I$ if and only if $J\subset I$.
By Lemma \ref{lemma1}, $f(A_I)\subset S_I$. 
Since $f(A_J)=A_J$ for every proper $J\subset I$, we obtain that $f(A_I)=A_I$.
\end{proof}

\begin{lemma}\label{lemma3}
If $P_I\in{\mathcal G}'_1(V)$, then $g_1(P_I)=P_I$ or $g_1(P_I)=P^I$.
\end{lemma}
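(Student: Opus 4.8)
The plan is to force $g_1(P_I)$ inside a single explicitly known $2$-dimensional code, namely $A_I=Q+P_I$, whose only three $1$-dimensional subspaces are $Q$, $P_I$ and $P^I$, and then to rule out $Q$.

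First I would record that for $P_I\in {\mathcal G}'_1(V)$ (so $|I|\ge 3$) the code $A_I=Q+P_I$ lies in ${\mathcal C}(n,2)_2$: it contains $Q$, whose defining vector $(1,\dots,1)$ has all coordinates equal to $1$, so no coordinate functional vanishes on $A_I$. Since $A_I\supset P_I$, this gives $A_I\in {\mathcal S}^c(P_I)$. Because $f({\mathcal S}^c(P_I))\subset {\mathcal S}(g_1(P_I))$, the image $f(A_I)$ must contain $g_1(P_I)$; that is, $g_1(P_I)\subset f(A_I)$.

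Next I would compute $f(A_I)$. Using $A_I=A_{I^c}$, one of $I,I^c$ is contained in $\{1,\dots,n-1\}$, and Lemma \ref{lemma2} gives $f(A_J)=A_J$ for every nonempty $J\subset\{1,\dots,n-1\}$; hence $f(A_I)=A_I$ in all cases. Therefore $g_1(P_I)\subset A_I$, so $g_1(P_I)$ is one of the three $1$-dimensional subspaces of the $2$-dimensional space $A_I$. These are $Q=\langle(1,\dots,1)\rangle$, $P_I$, and the third, $\langle(1,\dots,1)+v_I\rangle$, which over ${\mathbb F}_2$ has coordinate $1$ exactly on $I^c$ and so equals $P^I$. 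Thus $g_1(P_I)\in\{Q,P_I,P^I\}$.

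Finally I would eliminate $g_1(P_I)=Q$. After the normalization we have $g_1(Q)=Q$, while $g_1(Q)\ne g_1(P)$ for every $P\in {\mathcal G}'_1(V)$ distinct from $Q$ (this separation follows from injectivity of $f$ together with $f({\mathcal A})={\mathcal S}(g_1(Q))$, and it is preserved when $f$ is replaced by $gf$). Consequently $g_1(P_I)\ne Q$, leaving $g_1(P_I)\in\{P_I,P^I\}$, which is the assertion; note $P_I\ne P^I$ since $I\ne I^c$, so this is a genuine dichotomy. I do not expect a real obstacle here: the argument is short, and the only point needing care is the identification of the third subspace of $A_I$ as $P^I$, which is exactly the computation $(1,\dots,1)+v_I=v_{I^c}$ over ${\mathbb F}_2$.
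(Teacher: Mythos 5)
Your proof is correct and follows essentially the same route as the paper: place $g_1(P_I)$ inside $f(A_I)=A_I$ via $A_I\in{\mathcal S}^c(P_I)$ and Lemma \ref{lemma2}, identify the three $1$-dimensional subspaces of $A_I$ as $Q,P_I,P^I$, and exclude $Q$ using the injectivity of $f$ together with $f({\mathcal A})={\mathcal S}(g_1(Q))$ and the normalization $g_1(Q)=Q$. The extra details you supply (non-degeneracy of $A_I$, the computation $(1,\dots,1)+v_I=v_{I^c}$) are just explicit versions of steps the paper takes for granted.
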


\begin{proof}
We have $A_I\in {\mathcal S}^c(P_I)$ and $$A_I=f(A_I)\in {\mathcal S}(g_1(P_I))$$
which implies that $g_1(P_I)\subset A_I$.
The $1$-dimensional subspaces of $A_I$ are $Q,P_I,P^I$.
It was noted above that $g_1(P_I)\ne Q$ and, consequently, $g_1(P_I)$ is $P_I$ or $P^I$. 
\end{proof}

\begin{lemma}\label{lemma-n4}
The statement of Theorem \ref{theorem-main} holds for $n=4$.
\end{lemma}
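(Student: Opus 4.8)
The plan is to exploit the small size of the graph (13 vertices, listed explicitly in Example \ref{exmp1}) together with everything already established after the normalization. By Lemma \ref{lemma2} we may assume $f$ is the identity on ${\mathcal A}$, so $f$ fixes all four elements $A_1,A_2,A_3,A_{1,2}$ (and hence all seven elements of ${\mathcal A}$), and by \eqref{eq3} it fixes the three elements $P^i+P^j$ of ${\mathcal B}$. It remains only to determine the action of $f$ on the three elements of ${\mathcal C}$, namely $P^1+P^4,\,P^2+P^4,\,P^3+P^4$. First I would record, for each $X\in{\mathcal C}$, the set of its neighbours inside ${\mathcal A}\cup{\mathcal B}$ in the graph $\Gamma(4,2)_2$; since $f$ is a graph isomorphism fixing every vertex of ${\mathcal A}\cup{\mathcal B}$, the image $f(X)$ must be adjacent in $\Gamma_2(V)$ to exactly those already-fixed vertices. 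This adjacency constraint, being $2$-dimensional subspaces sharing a common line, should pin down $f(X)$ to a very short list of candidates.

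Next I would invoke Lemma \ref{lemma3}. Each $X=P^i+P^4$ lies in ${\mathcal S}^c(P^i)$ and in ${\mathcal S}^c(P_{i,4})=\mathcal S^c(P^{j,k})$ (where $\{i,j,k\}=\{1,2,3\}$), and the induced maps on lines are controlled by $g_1$. Since $g_1$ fixes each $P^i$ and, by Lemma \ref{lemma3}, sends each admissible $P_I$ either to $P_I$ or to $P^I$, the image $f(X)$ must contain $g_1(P^i)=P^i$ and $g_1(P_{i,4})\in\{P_{i,4},P^{i,4}\}$. Because $X$ itself has line-set $\{P^i,P_4,P_{i,4}\}$ whereas $X^c=P^i+P_4$ has line-set $\{P^i,P^4,P^{i,4}\}$ (as computed in Example \ref{exmp2}), the two-way choice in $g_1$ forces $f(X)=X$ or $f(X)=X^c=h(X)$. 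The heart of the argument is then to show these choices are made \emph{coherently}: either $f$ fixes all of ${\mathcal C}$, giving $f=\mathrm{id}$ and hence an extendable isomorphism; or $f$ sends every $X\in{\mathcal C}$ to $X^c$, giving $f=h$. Any mixed choice must be excluded.

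To rule out mixed choices I would use the non-adjacency that $h$ fails to preserve, together with injectivity of $f$. The three elements of ${\mathcal C}$ pairwise share the line $P_4=P^{1,2,3}$, so $P^i+P^4$ and $P^j+P^4$ are adjacent for $i\ne j$; meanwhile the three subspaces $X^c=P^i+P_4$ pairwise share $P^4$ and so are also mutually adjacent, but a subspace of the first family and one of the second family (with different indices) have line-sets meeting only in $P^i$ versus $P^j$, hence are \emph{not} adjacent. Thus if $f$ fixed some $P^i+P^4$ but sent $P^j+P^4$ to its complement, the images would fail to be adjacent while the originals are adjacent — contradicting that $f$ is a graph homomorphism. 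This should force the all-fixed or all-complemented dichotomy. I expect the main obstacle to be bookkeeping: correctly reading off from Examples \ref{exmp1}--\ref{exmp2} which pairs among the six relevant subspaces are adjacent, since adjacency here is the delicate condition that distinguishes $h$ from a genuine automorphism. Once the dichotomy is established, in the first case $f$ extends (it is the identity, hence induced by a semilinear automorphism), and in the second case $f=h=gh$ with $g=\mathrm{id}$, completing the proof for $n=4$.
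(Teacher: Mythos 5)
Your overall strategy is close to the paper's, but as written it contains a genuine gap plus a systematic bookkeeping error that hides the shortcut the paper uses. The gap: you invoke Lemma \ref{lemma3} for the line $P_{i,4}=P_{\{i,4\}}$, but for $n=4$ we have $|\{i,4\}|=2$, so $P_{i,4}\notin{\mathcal G}'_1(V)$, the set ${\mathcal S}^c(P_{i,4})$ is not a star of $\Gamma(4,2)_2$ (Example \ref{exmp3}), and $g_1(P_{i,4})$ is simply not defined; the step ``$f(X)$ must contain $g_1(P_{i,4})\in\{P_{i,4},P^{i,4}\}$'' therefore has no justification. It is repairable: $X=P^i+P^4$ is adjacent to $A_{\{i,4\}}\in{\mathcal A}$, which $f$ fixes and whose lines are $Q$, $P_{i,4}$, $P^{i,4}$; since $f(X)\notin f({\mathcal A})={\mathcal S}(Q)$, the image $f(X)$ meets $A_{\{i,4\}}$ in $P_{i,4}$ or $P^{i,4}$, which together with $g_1(P^i)=P^i$ yields $f(X)\in\{X,X^c\}$. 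The bookkeeping error is that you have $P^4$ and $P_4$ interchanged throughout: $X=P^i+P^4$ has line-set $\{P^i,P^4,P_{i,4}\}$ and the three elements of ${\mathcal C}$ pairwise share $P^4=P_{1,2,3}$, while $X^c=P^i+P_4$ has line-set $\{P^i,P_4,P^{i,4}\}$ and the complements pairwise share $P_4$ (Examples \ref{exmp1}, \ref{exmp2}). Also, an isomorphism onto a not-necessarily-induced subgraph only forces $f(X)$ to be adjacent in $\Gamma_2(V)$ to the images of the neighbours of $X$, not to \emph{exactly} those; this overstatement does not affect your argument, which only uses the forward direction.

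The swap matters because it conceals the observation that makes the paper's proof two lines long: the common line of the three elements of ${\mathcal C}$ is $P^4=P_{1,2,3}$, which \emph{does} lie in ${\mathcal G}'_1(V)$, i.e., ${\mathcal C}={\mathcal S}^c(P^4)$ is a single star. Lemma \ref{lemma3} applied to $P^4$ gives $g_1(P^4)\in\{P^4,P_4\}$, and then $f(P^i+P^4)=g_1(P^i)+g_1(P^4)=P^i+g_1(P^4)$ for all $i$ simultaneously: the first choice gives $f=\mathrm{id}$ and the second gives $f=h$ by Example \ref{exmp2}. Your separate adjacency argument excluding mixed choices is correct (the images $P^i+P^4$ and $P^j+P_4$ indeed share no line), but it becomes redundant once the single admissible common line is used.
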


\begin{proof}If $n=4$, then
$${\mathcal B}=\{P^1+P^2,\;P^1+P^3,\;P^2+P^3\}$$
and 
$${\mathcal C}={\mathcal S}^c(P^4)=\{P^1+P^4,\; P^2+P^4,\; P^3+P^4\},$$
see Example \ref{exmp1}.
The restriction of $f$ to ${\mathcal A}\cup {\mathcal B}$ is identity.
By Lemma \ref{lemma3}, 
$$g_1(P^4)=P^4\;\mbox{ or }\;g_1(P^4)=P_4.$$
If $i\in \{1,2,3\}$, then $g_1(P^i)=P^i$ and
$$f(P^i+P^4)=g_1(P^i)+g_1(P^4)= P^i+g_1(P^4).$$
If $g_1(P^4)=P^4$, then $f$ is identity. 
Example \ref{exmp2} shows that $f=h$ if $g_1(P^4)=P_4$.
\end{proof}

Let $I$ be a non-empty subset of $\{1,\dots,n-1\}$ and 
let ${\mathcal C}_I$ be the set formed by all elements of ${\mathcal C}(n,2)_2$ contained in $S_I$.
Denote by $\Gamma_I$ the subgraph of $\Gamma(n,2)_2$ induced by ${\mathcal C}_I$.
If $|I|=1$ or $|I|=2$, then $\Gamma_I$ is a single vertex or $K_4$, respectively.
In the case when $|I|\ge 3$, we consider the projection of $V$ on ${\mathbb F}^{|I|+1}$ which 
removes each $i$-coordinate with $i\in \{1,\dots,n-1\}\setminus I$; 
it induces a natural isomorphism between $\Gamma_I$ and $\Gamma(|I|+1,2)_2$.
The restriction of $f$ to ${\mathcal C}_I$ is denoted by $f_I$.
If $I=\{i_1,\dots,i_m\}$, then we write ${\mathcal C}_{i_1,\dots,i_m}$ and $f_{i_1,\dots,i_m}$ instead of ${\mathcal C}_I$ and $f_I$, respectively.

\begin{lemma}\label{lemma4}
The map $f_I$ is an automorphism of  the graph $\Gamma_I$.
\end{lemma}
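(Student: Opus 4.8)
\section*{Proof proposal for Lemma \ref{lemma4}}

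The plan is to establish the two ingredients of the assertion separately: first that $f$ maps ${\mathcal C}_I$ into itself, and then that the resulting self-map of the finite vertex set is a graph automorphism. The second ingredient will be almost formal once the first is in hand, so the substance of the proof is the invariance $f({\mathcal C}_I)\subseteq{\mathcal C}_I$, which in turn splits into showing that each $f(X)$ is contained in $S_I$ and that it remains non-degenerate.

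I would first dispose of the small and the trivial cases. For $|I|\le 2$ the graph $\Gamma_I$ is a single vertex or $K_4$ and the claim is immediate, so I assume $|I|\ge 3$; and since the elements of $({\mathcal A}\cup{\mathcal B})\cap{\mathcal C}_I$ are fixed by $f$ (by Lemma \ref{lemma2} and by $f(P^i+P^j)=P^i+P^j$), it suffices to treat $X\in{\mathcal C}\cap{\mathcal C}_I$. For such an $X$ I would examine its three $1$-dimensional subspaces, all of which lie in $S_I$. Transporting to ${\mathbb F}^{|I|+1}$ through the projection isomorphism $\Gamma_I\cong\Gamma(|I|+1,2)_2$, the coordinate recording the common value on $\{1,\dots,n\}\setminus I$ is covered by exactly two of the three codewords of $X$; hence exactly one $1$-dimensional subspace of $X$ is supported inside $I$, while the other two contain the block $\{1,\dots,n\}\setminus I$. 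Analysing the covering condition on the coordinates inside $I$ shows that in every configuration at least two of the three subspaces have $V$-weight at least $3$, so they belong to ${\mathcal G}'_1(V)$; call two such subspaces $P$ and $P'$.

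Next I would compute the images $g_1(P),g_1(P')$ and show that they force $f(X)$ into $S_I$. Writing a line of $S_I$ as $P_K$, the condition $P_K\subset S_I$ means that the support $K$ is constant off $I$; then $K^c$ is constant off $I$ as well, so both $P_K$ and $P^K$ lie in $S_I$. Combined with Lemma \ref{lemma3}, this yields $g_1(P),g_1(P')\subset S_I$. Since $X\in{\mathcal S}^c(P)\cap{\mathcal S}^c(P')$ and $f({\mathcal S}^c(P))\subset{\mathcal S}(g_1(P))$, both $g_1(P)$ and $g_1(P')$ are contained in $f(X)$; they are distinct because $X\notin{\mathcal A}$ (a coincidence $g_1(P)=g_1(P')$ would force the two supports to be complementary and hence $Q\subset X$), and therefore $f(X)=g_1(P)+g_1(P')\subset S_I$. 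Thus $f$ carries ${\mathcal C}_I$ into the $2$-dimensional subspaces of $S_I$.

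It then remains to verify that every $f(X)$ is non-degenerate, which is the step I expect to be the main obstacle: a priori $f(X)=g_1(P)+g_1(P')$ could drop into a coordinate hyperplane $C_i$, and this is precisely the phenomenon by which the exceptional map $h$ leaves the class of non-degenerate codes. I would control it by unwinding the two possibilities of Lemma \ref{lemma3} for the lines of $X$ and checking that the union of the supports of $g_1(P)$ and $g_1(P')$ is all of $\{1,\dots,n\}$, using that $X$ itself is non-degenerate. Granting this, $f({\mathcal C}_I)\subseteq{\mathcal C}_I$. Finally, $f_I$ is injective as a restriction of $f$, and ${\mathcal C}_I$ is finite, so $f_I$ is a bijection of ${\mathcal C}_I$; it sends each induced edge of $\Gamma_I$ to an induced edge, and an injection of the finite edge set of $\Gamma_I$ into itself is onto, whence $f_I^{-1}$ also preserves adjacency. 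Hence $f_I$ is an automorphism of $\Gamma_I$, with the base case $n=4$ of Lemma \ref{lemma-n4} available to start any induction built on this reduction.
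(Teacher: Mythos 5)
Your argument for the containment $f(X)\subset S_I$ is sound and takes a genuinely different route from the paper's: you locate two lines $P,P'$ of $X$ lying in ${\mathcal G}'_1(V)$, apply Lemma \ref{lemma3} together with the observation that $S_I$ is closed under $P_K\mapsto P^K$ to get $g_1(P),g_1(P')\subset S_I$, and recover $f(X)=g_1(P)+g_1(P')$. The paper instead picks two distinct $A_J,A_T$ with $J,T\subset I$ adjacent to $X$, uses Lemma \ref{lemma2} to see they are fixed, and uses $Q\not\subset f(X)$ to conclude $f(X)\subset A_J+A_T\subset S_I$; this needs no weight analysis and treats all of ${\mathcal C}_I\setminus\{A_J\}_{J\subset I}$ uniformly. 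On that point your reduction is slightly off: for $n\ge 5$ the set ${\mathcal B}$ contains codes that are not of the form $P^i+P^j$ with $i,j\in\{1,\dots,n-1\}$ (for $n=6$, $P_{1,2,3,4}+P_{1,5,6}$ lies in ${\mathcal B}\cap{\mathcal C}_{1,2,3,4}$ and only one of its lines is a $P^i$), so you cannot dismiss ${\mathcal B}\cap{\mathcal C}_I$ by citing $f(P^i+P^j)=P^i+P^j$. Fortunately your two-heavy-lines argument applies verbatim to those codes, so this is repairable.

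The real problem is the last third of your proposal. You plan to show that $f(X)$ is non-degenerate, so that $f({\mathcal C}_I)\subseteq{\mathcal C}_I$ and $f_I$ is a bijection of ${\mathcal C}_I$. This is false and cannot be repaired: $f$ may well be $h$ (or $gh$), and for $X\in{\mathcal C}$ the image $h(X)=X^c$ is contained in the coordinate hyperplanes $C_i$ with $i\in I\cap J$ --- Section 3 states explicitly that $X^c\notin{\mathcal C}(n,2)_2$. Concretely, when $g_1(P)=P^T$ and $g_1(P')=P^{T'}$, the union of the supports of the image lines is $(T\cap T')^c$, and $T\cap T'\ne\emptyset$ for $X\in{\mathcal C}$, so the covering check you propose must fail. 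What the lemma actually needs (and what the paper's proof delivers, despite the word ``automorphism'' in the statement) is only that $f$ maps ${\mathcal C}_I$ into ${\mathcal G}_2(S_I)$: under the projection identifying $\Gamma_I$ with $\Gamma(|I|+1,2)_2$, this makes $f_I$ an isomorphism of $\Gamma(|I|+1,2)_2$ onto a subgraph of the Grassmann graph of $S_I$, which is exactly the hypothesis needed to invoke Theorem \ref{theorem-main} inductively in Lemma \ref{lemma5} and in the concluding argument. You should stop after establishing the containment and state the conclusion in that weaker form; the final ``injection of a finite edge set is onto'' paragraph rests on the false premise and should be discarded.
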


\begin{proof}
Show that $f(X)\subset S_I$ for every $X\in {\mathcal C}_I$.
This is clear if $X=A_J$, $J\subset I$. Suppose that $X$ is distinct from any such $A_J$.
There are distinct $A_J,A_{T}$, $J,T\subset I$ adjacent to $X$. 
Then $f(X)$ is adjacent to $f(A_J)=A_J$ and $f(A_T)=A_T$. Since $f(X)\not\in {\mathcal A}$,
it  does not contain $Q$ and, consequently,
 intersects $A_J,A_T$ in distinct $1$-dimensional subspaces.
 Therefore,
$$f(X)\subset A_J+A_T\subset S_I$$
which gives the claim.
\end{proof}

\begin{lemma}\label{lemma5}
If $f_I$ is identity for a certain $3$-element $I\subset \{1,\dots,n-1\}$, then $f$ is identity.
\end{lemma}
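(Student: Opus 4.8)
The plan is to reduce everything to the three-element slices and then spread the identity across them. Since $S_{\{1,\dots,n-1\}}=V$, we have $\mathcal C_{\{1,\dots,n-1\}}=\mathcal C(n,2)_2$ and $\Gamma_{\{1,\dots,n-1\}}=\Gamma(n,2)_2$, so it is enough to determine $f$ on all the $\mathcal C_J$. The governing observation is that for every $3$-element $J\subseteq\{1,\dots,n-1\}$ the map $f_J$ is, by Lemma \ref{lemma4}, an automorphism of $\Gamma_J$, and the projection isomorphism identifies $\Gamma_J$ with $\Gamma(4,2)_2$. Hence by Lemma \ref{lemma-n4}, and in view of the normalisation already in force (the equalities $f(A_a)=A_a$ for $a\in J$ coming from \eqref{eq1} and \eqref{eq3}, which fix the frame of the identification), $f_J$ is \emph{either} the identity \emph{or} the analogue of the exceptional map $h$ on $\Gamma_J$. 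So my first step is to record this dichotomy uniformly for all such $J$.

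The second step is the propagation. I would take two $3$-element subsets $J,J'\subseteq\{1,\dots,n-1\}$ with $|J\cap J'|=2$ and show that $f_J=\mathrm{id}$ forces $f_{J'}=\mathrm{id}$. The common pair $J\cap J'$ gives $S_{J\cap J'}\subseteq S_J\cap S_{J'}$, so the four codes of $\Gamma_{J\cap J'}\cong K_4$ lie in both $\mathcal C_J$ and $\mathcal C_{J'}$; under $f_J=\mathrm{id}$ they are fixed, hence $f_{J'}$ fixes them as well. Since the $h$-analogue of $\Gamma_{J'}$ moves at least one vertex of this $K_4$, the dichotomy above leaves only $f_{J'}=\mathrm{id}$. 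Because the Johnson-type graph whose vertices are the $3$-element subsets of $\{1,\dots,n-1\}$ and whose edges join subsets meeting in a pair is connected for $n\ge 5$ (the case $n=4$ is Lemma \ref{lemma-n4}), the identity spreads from the hypothesised seed $I$ to $f_J=\mathrm{id}$ for \emph{every} $3$-element $J\subseteq\{1,\dots,n-1\}$.

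From this I would extract that $g_1(P_K)=P_K$ for every $K\subseteq\{1,\dots,n-1\}$ reached by these slices, which, together with $g_1(Q)=Q$ and the already-known $g_1(P^i)=P^i$, fixes $g_1$ on all points supported inside $\{1,\dots,n-1\}$. The remaining points — those $P_K$ with $n\in K$, i.e. the one-dimensional subspaces entering the $\mathcal C$-part of the decomposition — are then pinned down by Lemma \ref{lemma3}, which restricts each such $g_1(P_K)$ to $\{P_K,P^K\}$, the value $P_K$ being selected by demanding that $f$ remain adjacent to the codes already fixed. Since each vertex of $\Gamma(n,2)_2$ is the span of two of its points and $f(X)$ is determined as $g_1$ of those points, $g_1=\mathrm{id}$ yields $f=\mathrm{id}$, which is the assertion of Theorem \ref{theorem-main} in this normalised situation.

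The step I expect to be the main obstacle is the $h$-exclusion inside the propagation: I must verify that the exceptional map of $\Gamma_{J'}$ genuinely displaces a vertex of the shared $K_4$, rather than acting as the identity on $\mathcal C_{J\cap J'}$ while moving only codes that lie in $S_{J'}$ but outside $S_J$. This amounts to a compatibility between the local $\mathcal A,\mathcal B,\mathcal C$ decompositions of $\Gamma_J$, $\Gamma_{J'}$ and the global one, and I expect it to be settled by the explicit description of $X^{c}$ and of the subspaces $S_J$ recorded before Example \ref{exmp2} and in the formula for $S_I$. The secondary difficulty is the final bookkeeping for the coordinate $n$, where Lemma \ref{lemma3} must be combined with enough already-fixed adjacencies to exclude the value $P^K$.
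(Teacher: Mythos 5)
Your propagation step fails at exactly the point you flagged. Take $3$-element sets $J,J'\subset\{1,\dots,n-1\}$ with $J\cap J'=\{j_1,j_2\}$. The shared $K_4$, i.e.\ ${\mathcal C}_{J\cap J'}$, consists of $A_{j_1}$, $A_{j_2}$, $A_{\{j_1,j_2\}}$ and $P^{j_1}+P^{j_2}$. Under the projection identifying $\Gamma_{J'}$ with $\Gamma(4,2)_2$ (local coordinates $j_1,j_2,j_3,n$, with $n$ in the role of the fourth coordinate), the first three codes contain $Q$ and land in the local ${\mathcal A}$-part, while $P^{j_1}+P^{j_2}$ becomes the local $P^1+P^2$, which lies in the local ${\mathcal B}$-part; the local ${\mathcal C}$-part of $\Gamma_{J'}$ is $\{P^{j_i}+P_{J'}\}_{i=1,2,3}$ and is disjoint from ${\mathcal C}_{J\cap J'}$. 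Since $h$ fixes ${\mathcal A}\cup{\mathcal B}$ pointwise, the $h$-analogue of $\Gamma_{J'}$ acts as the \emph{identity} on the whole shared $K_4$, so a $2$-element overlap cannot distinguish the two branches of your dichotomy and the identity does not spread. This is precisely why the paper's proof inducts on $n$ through $(n-2)$-element subsets $J,J'$ (whose overlap has $n-3\ge 3$ elements when $n\ge 6$, so the inductive hypothesis applies directly), and why the bottleneck case $n=5$, where the overlap again has only two elements, is handled by a different mechanism: one exhibits adjacent codes $X=P^1+P_{1,2,3}\in{\mathcal C}_J$ and $Y=P^4+P_{2,3,4}\in{\mathcal C}_{J'}$ lying \emph{outside} the overlap, uses $f(X)=X$ together with $f(Y)\subset S_{J'}$, $f(Y)\supset P^4$ and $X\cap S_{J\cap J'}=P_{1,4}$ to force $f(Y)=Y$, and notes that $Y$ belongs to the part that the $h$-analogue would move. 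Some such cross-slice adjacency argument is unavoidable; the shared $K_4$ alone carries no information.

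There is a second, independent gap in your assembly step. For $n\ge 8$ the sets ${\mathcal C}_J$ with $|J|=3$ do not exhaust ${\mathcal C}(n,2)_2$, and the corresponding subspaces $S_J$ (which are only $4$-dimensional, with $1$-dimensional subspaces $P_L$ satisfying $L\subseteq J$ or $L^c\subseteq J$) do not contain $P_K$ when $4\le|K|\le n-4$; so your slices do not ``reach'' all points of ${\mathcal G}'_1(V)$ supported in $\{1,\dots,n-1\}$, contrary to what your third paragraph assumes. The paper's choice of $(n-2)$-element slices makes ${\mathcal C}(n,2)_2$ the union of the ${\mathcal C}_J$ together with the single star ${\mathcal S}^c(P^n)$, whose elements are then treated one by one via the pairs $P,P'$ cut out by $S_J$ and $S_{J'}$; your concluding use of Lemma \ref{lemma3} for the coordinate $n$ is in the right spirit but needs that covering statement to get off the ground.
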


\begin{proof}
We prove the statement by induction. The case $n=4$ is trivial. Suppose that $n\ge 5$ and $f_I$ is identity for a $3$-element $I\subset \{1,\dots,n-1\}$.
 
Let $J$ and $J'$ be $(n-2)$-element subsets of $\{1,\dots,n-1\}$ such that $I\subset J$ and $I\not\subset J'$.
If $n=5$, then $J=I$. If $n\ge 6$, then $f_J$ is identity by inductive hypothesis (since $\Gamma_J$ can be naturally identified with $\Gamma(n-1,2)_2$).
Show that $f_{J'}$ is identity.
%Without loss of generality, we can assume that $f_I$ is identity for $I=\{1,2,3\}$.
%By inductive hypothesis, $f_{\{1,\dots,n-2\}}$ is identity. 
Without loss of generality, we can consider the case when
$$J=\{1,\dots,n-2\}\;\mbox{ and }\; J'=\{2,\dots,n-1\}.$$
If $n\ge 6$, then $J\cap J'$ contains the $3$-element subset $\{2,3,4\}$. 
The map $f_{2,3,4}$ is identity (since $f_J$ is identity) which implies that $f_{J'}$ is identity by inductive hypothesis.
If $n=5$, then 
$$J\cap J'=\{2,3\}.$$
The intersection of 
$$X=P^1+P_{1,2,3}\in {\mathcal C}_{1,2,3}\;\mbox{ and }\;Y=P^{4}+P_{2,3,4}\in {\mathcal C}_{2,3,4}$$
is $P_{1,4}$, i.e. $X,Y$ are adjacent.
Consequently, $f(X)=X$ and $f(Y)$ are adjacent. 
Since
$$X\subset S_{1,2,3},\;\;f(Y)\subset S_{2,3,4},\;\; S_{1,2,3}\cap S_{2,3,4}=S_{2,3}$$
and $X$ intersects $S_{2,3}$ precisely in $P_{1,4}$, the intersection of $X$ and $f(Y)$ is $P_{1,4}$.
Also, $f(Y)$ contains $P^4$
(since $Y\in {\mathcal S}^c(P^4)$ and $g_1(P^4)=P^4$).
Then
$$f(Y)=P^4+P_{1,4}=Y.$$
By Lemma \ref{lemma-n4}, $f_{2,3,4}$ is identity or it is the restriction of $h$ to ${\mathcal C}_{2,3,4}$. 
In the second case, $f(Y)=Y^c$ (since $Y$ belongs to ${\mathcal C}$).
Therefore, $f_{2,3,4}$ is identity.

By the arguments given above, $f_J$ is identity for every $(n-2)$-element subset $J\subset \{1,\dots,n-1\}$.
Observe that ${\mathcal C}(n,2)_2$ is the union of ${\mathcal S}^c(P^n)$ and all ${\mathcal C}_J$ such that $J$ is an $(n-2)$-element subset of $\{1,\dots,n-1\}$.
Thus, we need to show that $f(X)=X$ for every $X\in {\mathcal S}^c(P^n)$ distinct from $A_n$ (we have $f(A_n)=A_n$ by Lemma \ref{lemma2}). 
Since $Q\not\subset X$ and the intersection of all ${\mathcal C}_J$ such that $J$ is an $(n-2)$-element subset of $\{1,\dots,n-1\}$ is $Q$,
there are $(n-2)$-element $J,J'\subset \{1,\dots,n-1\}$ such that $S_J$ and $S_{J'}$ intersect $X$ in distinct $1$-dimensional subspaces $P$ and $P'$, respectively.  
There are the following possibilities:
\begin{enumerate}
\item[(a)] $P,P'\in {\mathcal G}'_1(V)$;
\item[(b)] one of $P,P'$ does not belong to ${\mathcal G}'_1(V)$.
\end{enumerate}
Consider the case (a). 
We have $P=P_{T}$ and $|T|\ge 3$.
The intersection of $J$ and $T$ is non-empty.
If $i\in T\cap J$, then $P+P^i$ is an element of  ${\mathcal C}_J$ containing $P$.
Other element of ${\mathcal C}_J$ containing $P$ is $A_T$.
Then
$$g_1(P)=f(P+P^i)\cap f(A_T).$$
The map $f_J$ is identity which implies that 
$$g_1(P)=(P+P^i)\cap A_T=P.$$
Similarly, we establish that $g_1(P')=P'$.
Hence
$$f(X)=g_1(P)+g_1(P')=P+P'=X.$$
Since $P^n$ is the intersection of $X$ and $A_n$, 
$$g_1(P^n)=f(X)\cap f(A_n)=X\cap A_n=P^n.$$
In the case (b), we obtain that $X=P^i+P^n$ for a certain $i\in \{1,\dots,n-1\}$ and 
$$f(X)=g_1(P^i)+g_1(P^n)=P^i+P^n=X.$$
So, $f$ is identity.
\end{proof}

We assert that $f=h$ if $f$ is non-identity. 
This statement will be proved by induction. It holds for $n=4$ (Lemma \ref{lemma-n4}).
Suppose that $n\ge 5$. Lemma \ref{lemma5} shows that for every $3$-element $I\subset \{1,\dots,n-1\}$ the map $f_I$ is non-identity.
Then  for every $(n-2)$-element $I\subset \{1,\dots,n-1\}$ the map $f_I$ is non-identity and, by inductive hypothesis, $f_I$  is the restriction of $h$ to ${\mathcal C}_I$.

As in the proof of Lemma \ref{lemma5}, we take $X\in {\mathcal S}^c(P^n)$ distinct from $A_n$. 
We will need the following observations:
\begin{enumerate}
\item[$\bullet$] $P^n\subset H$ if $n$ is even;
\item[$\bullet$] $P^n\not\subset H$ if $n$ is odd. 
\end{enumerate}
Consider $(n-2)$-element subsets $J,J'\subset \{1,\dots,n-1\}$ such that $S_J$ and $S_{J'}$ intersect $X$ in distinct $1$-dimensional subspaces $P$ and $P'$, respectively.
One of the following possibilities is realized:
\begin{enumerate}
\item[(a)] $P,P'\in {\mathcal G}'_1(V)$;
\item[(b)] one of $P,P'$ does not belong to ${\mathcal G}'_1(V)$.
\end{enumerate}
Consider the case (a). 
Suppose that $P=P_T$ and $P'=P_{T'}$. Then $P_{T'}\ne P^T$ (otherwise, $X$ contains $Q$ which contradicts our assumption).
There are $Y\in {\mathcal C}_J$ and $Y'\in {\mathcal C}_{J'}$ such that
$$P=Y\cap A_T,\;\;P'=Y'\cap A_{T'}$$
(see the proof of Lemma  \ref{lemma5}). Then
$$g_1(P)=f(Y)\cap A_T,\;\;g_1(P')=f(Y')\cap A_{T'}.$$
Since $f_J$ and $f_{J'}$ are the restriction of $h$ to ${\mathcal C}_J$ and ${\mathcal C}_{J'}$ (respectively), we have
$$g_1(P)=P\;\mbox{ if }\;P\subset H\;\mbox{ and }\; g_1(P)=P^T\;\mbox{ if }\;P\not\subset H,$$
$$g_1(P')=P'\;\mbox{ if }\;P'\subset H\;\mbox{ and }\; g_1(P')=P^{T'}\;\mbox{ if }\;P'\not\subset H.$$
If $X\in {\mathcal B}$, then $P,P',P^n$ are contained in $H$ which means that $n$ is even,
$$f(X)=g_1(P)+g_1(P')=P+P'=X$$
and 
$$g_1(P^n)=f(X)\cap f(A_n)=X\cap A_n=P^n.$$
Suppose that $X\in {\mathcal C}$. Then at least one of $P,P'$ is not contained in $H$.
If $P,P'$ both are not contained in $H$, then $n$ is even (since $P^n\subset H$) and
$$f(X)=g_1(P)+g_1(P')=P^T+P^{T'}=X^c;$$
since $P^n$ is contained in $H$, we have $P^n\subset X^c$ which implies that  
$$g_1(P^n)=f(X)\cap f(A_n)= X^c\cap A_n=P^n.$$
If $P\not\subset H$ and $P'\subset H$, then $n$ is odd (since $P^n\not\subset H$) and 
$$f(X)=g_1(P)+g_1(P')=P^T+P'=X^c;$$
in this case, $X^c$ contains $P_n$  and
$$g_1(P^n)=f(X)\cap f(A_n)= X^c\cap A_n=P_n.$$
The case when $P\subset H$ and $P'\not\subset H$ is similar.
So,
$$f(X)=X\;\mbox{ if }\;X\in {\mathcal B}\;\mbox{ and }\;f(X)=X^c\;\mbox{ if }\;X\in {\mathcal C}.$$
Furthermore, we established the following:
\begin{enumerate}
\item[$\bullet$] $g_1(P^n)=P^n$ if $n$ is even,
\item[$\bullet$] $g_1(P^n)=P_n$ if $n$ is odd.
\end{enumerate}
In the case (b), 
we have $X=P^i+P^n$ for a certain $i\in \{1,\dots,n-1\}$.
If $n$ is even, then $X\in {\mathcal B}$ and
$$f(X)=g_1(P^i)+g_1(P^n)=P^i+P^n=X.$$
If $n$ is odd, then $X\in {\mathcal C}$ and
$$f(X)=g_1(P^i)+g_1(P^n)=P^i+P_n=X^c.$$
So, $f=h$.

\end{document}